\documentclass[12pt,a4paper,reqno,twoside]{amsart}

\usepackage[english]{babel}
\usepackage{stmaryrd}
\usepackage{dsfont}
\usepackage[symbol*,ragged]{footmisc}
\usepackage[colorlinks,linkcolor=red,anchorcolor=blue,citecolor=blue,urlcolor=blue]{hyperref}
\usepackage{color,xcolor}

\usepackage{geometry}
\usepackage{amssymb}
\usepackage{amsmath}
\usepackage{mathrsfs}
\usepackage{amsfonts}
\usepackage{epsfig}

\usepackage{amsthm}
\usepackage{amsxtra}
\usepackage{bbding}
\usepackage{epsfig}
\usepackage{graphicx}
\usepackage{latexsym}
\usepackage{mathbbol}
\usepackage{bbold}

\usepackage{pifont}
\usepackage{wasysym}
\usepackage{skull}
\usepackage{float}

\DeclareSymbolFontAlphabet{\mathbb}{AMSb}
\DeclareSymbolFontAlphabet{\mathbbol}{bbold}

\usepackage{amscd}
\usepackage[all]{xy}
\allowdisplaybreaks[4]
\usepackage{setspace}

\theoremstyle{plain}
\newtheorem{theorem}{\normalfont\scshape Theorem}[section]
\newtheorem{proposition}{\normalfont\scshape Proposition}[section]
\newtheorem{lemma}[proposition]{\normalfont\scshape Lemma}

\newtheorem*{corollary*}{\normalfont\scshape Corollary}

\theoremstyle{remark}
\newtheorem*{remark*}{\normalfont\scshape Remark}
\newtheorem*{notation}{\normalfont\scshape Notation}

\numberwithin{equation}{section}
\addtocounter{footnote}{1}

\renewcommand{\footnoterule}{
  \kern -3pt
  \hrule width 2.5in height 0.4pt
  \kern 3pt
}

\makeatletter
\@ifundefined{MakeUppercase}{}{}
\makeatother

\begin{document}
	
\title[ On a variant of the Piatetski--Shapiro prime number theorem ]
	  { On a variant of the Piatetski--Shapiro prime number theorem }

\author[Linji Long, Jinjiang Li, Min Zhang]
       {Linji Long \quad \& \quad Jinjiang Li \quad \& \quad Min Zhang}

\address{[Linji Long] Department of Mathematics, China University of Mining and Technology,
         Beijing 100083, People's Republic of China}

\email{linji.long.math@gmail.com}

\address{[Jinjiang Li] (Corresponding author) Department of Mathematics, China University of Mining and Technology,
         Beijing 100083, People's Republic of China}

\email{jinjiang.li.math@gmail.com}

\address{[Min Zhang] School of Applied Science, Beijing Information Science and Technology University,
		 Beijing 100192, People's Republic of China  }

\email{min.zhang.math@gmail.com}

\date{}

\footnotetext[1]{Jinjiang Li is the corresponding author. \\
  \quad\,\,
{\textbf{Keywords}}: Piatetski--Shapiro primes; prime distribution; zero--density estimates\\

\quad\,\,
{\textbf{MR(2020) Subject Classification}}: 11N05, 11N32, 11N37, 11N80

}

\begin{abstract}
Denote by $\Pi_\gamma(x)$ the number of primes $p \leqslant x$ such that $[p^{1/\gamma}]$ is also a prime. In this paper, it is proved that the expected asymptotic formula
\begin{equation*}
\Pi_\gamma(x)=\frac{\gamma x}{(\log x)^2} (1+o(1))
\end{equation*}
holds for $\gamma > 15/13$, which constitutes an improvement upon the previous result of Balog \cite{Balog-1987} who showed that the above asymptotic formula holds for $\gamma > 6/5$.
\end{abstract}

\maketitle

\section{Introduction and main result}

Let $\gamma\in(\frac{1}{2},1)$ be a fixed real number. The Piatetski--Shapiro sequences are sequences of the form
\begin{equation*}
 \mathscr{N}_{\gamma}:=\big\{[ n^{1/\gamma}]:\,n\in \mathbb{N}^+\big\}.
\end{equation*}
Such sequences have been named in honor of Piatetski--Shapiro, who \cite{Piatetski-Shapiro-1953}, in
1953, proved that $\mathscr{N}_{\gamma}$ contains infinitely many primes provided that $\gamma\in(\frac{11}{12},1)$. The prime numbers of the form $p=[ n^{1/\gamma}]$ are called \textit{Piatetski--Shapiro primes of type $\gamma$}. More precisely, for such $\gamma$ Piatetski--Shapiro \cite{Piatetski-Shapiro-1953} showed that the counting function
\begin{equation*}
 \pi_\gamma(x):=\#\big\{\textrm{prime}\,\, p\leqslant x:\,p=[ n^{1/\gamma}]\,\,\textrm{for some}\,\,
 n\in\mathbb{N}^+ \big\}
\end{equation*}
satisfies the asymptotic property
\begin{equation}\label{pi-gamma-PNT}
\pi_{\gamma}(x)=\frac{x^{\gamma}}{\log x}(1+o(1))
\end{equation}
as $x\to\infty$. Since then, the range for $\gamma$ of the above asymptotic formula in which it is known that $\mathscr{N}_{\gamma}$ contains infinitely many primes has been enlarged many times (see the literatures \cite{Baker-Harman-Rivat-1995,Kolesnik-1967,Leitmann-1975,Leitmann-1980,Heath-Brown-1983,Kolesnik-1985,Liu-Rivat-1992,
Rivat-1992,Rivat-Sargos-2001}) over the years and is currently known to hold for all $\gamma\in(\frac{2426}{2817},1)$ thanks to Rivat and Sargos \cite{Rivat-Sargos-2001}. Rivat and Wu \cite{Rivat-Wu-2001} also showed that there exist infinitely many Piatetski--Shapiro primes for $\gamma\in(\frac{205}{243},1)$ by showing a lower bound of $\pi_\gamma(x)$ with the expected order of magnitude. We remark that if $\gamma>1$ then $\mathscr{N}_\gamma$ contains all natural numbers, and hence all primes, particularly.

For an analogue of (\ref{pi-gamma-PNT}), one can consider the subset of $\mathscr{N}_\gamma$ and expect the counting function
\begin{equation*}
\Pi_\gamma(x):= \#\big\{\textrm{prime}\,\, p\leqslant x:\, [p^{1/\gamma}] = q, \, q \textrm{ is prime} \big\}
\end{equation*}
subject to
\begin{equation}\label{eq1_2}
\Pi_\gamma(x)=\frac{\gamma x}{(\log x)^2}(1+o(1)) .
\end{equation}
Here and below, the letters $p,q$ always denote primes. This is not trivial even for $\gamma>1$. Indeed the problem of representing primes by linear polynomials in prime variable is still not solved. One can therefore look on the investigation of (\ref{eq1_2}) for $\gamma>1$ as a progress towards the twin prime conjecture.
In 1992, Balog \cite{Balog-1987} showed that (\ref{eq1_2}) holds for $\gamma>6/5$.

In this paper, we shall continue to improve the result of Balog \cite{Balog-1987}, and establish the following theorem.
\begin{theorem}\label{Theorem-1}
(\ref{eq1_2}) holds for $\gamma>15/13$.
\end{theorem}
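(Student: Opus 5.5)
The plan is to reduce the condition $[p^{1/\gamma}]=q$ to primes $p$ in short intervals attached to primes $q$, and then control those short-interval counts on average over $q$ by zero-density estimates. Throughout, $\gamma>1$. The condition $[p^{1/\gamma}]=q$ is equivalent to
\[
q^\gamma\leqslant p<(q+1)^\gamma .
\]
Hence, up to $O(x^{1/\gamma})$,
\[
\Pi_\gamma(x)=\sum_{q\leqslant x^{1/\gamma}}\Big(\pi\big((q+1)^\gamma\big)-\pi\big(q^\gamma\big)\Big).
\]
The interval $[q^\gamma,(q+1)^\gamma)$ has length $h_q\asymp \gamma q^{\gamma-1}\asymp N^{1-1/\gamma}$ when $q\sim N^{1/\gamma}$, $N=q^\gamma$. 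So the task is to show that primes are distributed asymptotically correctly in intervals $[y,y+y^{\theta}]$ with $\theta=1-1/\gamma$, for almost all $y$ belonging to the sparse set $\{q^\gamma\}$.

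\textbf{Dyadic decomposition and explicit formula.} I would work with $\psi$ in place of $\pi$ and split $q$ into dyadic ranges $q\sim Q$, weighting by $\Lambda(n)$ instead of summing over primes $q$. The main term comes from
\[
\sum_{n\sim Q}\Lambda(n)\big((n+1)^\gamma-n^\gamma\big)\sim \gamma\sum_{n\sim Q}\Lambda(n)\,n^{\gamma-1}.
\]
After partial summation this yields $\gamma x/(\log x)^2$. Using the truncated explicit formula
\[
\psi(y+h)-\psi(y)=h-\sum_{|\Im\rho|\leqslant T}\frac{(y+h)^\rho-y^\rho}{\rho}+O\!\left(\frac{y\log^2 y}{T}\right),
\]
with $T$ slightly larger than $y/h\approx Q^{1}$ times $x^\varepsilon$, the error term becomes
\[
\sum_{\rho}\Big|\sum_{n\sim Q}\Lambda(n)\,\frac{(n+1)^{\gamma\rho}-n^{\gamma\rho}}{\rho}\Big|.
\]
Writing $((n+1)^{\gamma\rho}-n^{\gamma\rho})/\rho=\gamma\int_n^{n+1}u^{\gamma\rho-1}\,du$, it suffices to bound, for each zero $\rho=\beta+i t$, the exponential sum
\[
S(\rho)=\sum_{n\sim Q}\Lambda(n)\,n^{\gamma\beta-1+i\gamma t}
\]
on average over zeros.

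\textbf{Bounding $S(\rho)$.} The zeros are handled in two ways.
\begin{itemize}
\item When $|t|$ is small, say $|t|\leqslant Q^{1-\gamma+\varepsilon}$-ish, the sum over $\rho$ with large $\beta$ is controlled by the prime number theorem via the zero-free region.
\item When $|t|$ is large, $S(\rho)$ is an exponential sum $\sum\Lambda(n)e(\tfrac{\gamma t}{2\pi}\log n)$. I would bound it via Heath-Brown's identity (or Vaughan's identity), converting it into Type I and Type II sums. These are then estimated by exponent pairs and van der Corput's method, saving a power $Q^{-\delta}$ in terms of $|t|/Q$.
\end{itemize}
Combining, the error is bounded by
\[
\sum_{\rho}Q^{\gamma\beta}\,\min\big(1,\ \text{saving}(t)\big).
\]
I then apply a zero-density estimate $N(\sigma,T)\ll T^{A(1-\sigma)+\varepsilon}$ (Huxley's $A=12/5$, or Guth–Maynard's $A=30/13$), writing
\[
\sum_\rho Q^{\gamma\beta}\ll \int_0^1 Q^{\gamma\sigma}\,dN(\sigma,T),
\]
and require the total to be $\ll Q^{\gamma}(\log Q)^{-A}$.

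\textbf{Obtaining $15/13$.} Without exploiting any cancellation in $S(\rho)$, the trivial bound leads to the condition $T^{A(1-\sigma)}Q^{\gamma\sigma}<Q^\gamma$ with $T\approx Q$, that is $A<\gamma$ essentially for $\sigma$ near $1$, and the weak condition at $\sigma=1/2$. Balog's $6/5$ corresponds to $A=12/5$, giving $\gamma>A/2$. The threshold $15/13=(30/13)/2$ is then obtained exactly by inserting the Guth–Maynard density estimate $N(\sigma,T)\ll T^{30(1-\sigma)/13+\varepsilon}$.

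So the key step is to check that, with $h=y^{1-1/\gamma}$, the almost-all short-interval argument over the sparse sequence $q^\gamma$ needs only $N(\sigma,T)\ll T^{2\gamma(1-\sigma)-\varepsilon}$ uniformly in $\sigma$. The main obstacle is the averaging over the thin set of $n$, since there are only about $Q$ terms, not $y$ of them. I would handle this with a large-values/mean-value bound for the Dirichlet polynomial $\sum_{n\sim Q}\Lambda(n)n^{i\gamma t}$ over well-spaced zeros (a Halász–Montgomery type inequality), combined with the density bound. Near $\sigma=1$ the Vinogradov–Korobov zero-free region, giving a log-power saving, is used to finish.
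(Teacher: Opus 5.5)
Your last paragraph names the right mechanism, but the proposal never carries it out, and the route you actually develop cannot reach $15/13$. No pointwise power saving for $S(\rho)$ is available at height $|t|\asymp T\asymp Q$; Heath-Brown's identity and exponent pairs do not change this. With only a sub-power saving per zero you are left with $\sum_\rho Q^{\gamma\beta}$, i.e.\ the condition $\gamma>A$, as you observe yourself. (Your ``small $|t|$'' range $|t|\leqslant Q^{1-\gamma+\varepsilon}$ is also empty, since $\gamma>1$.)

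The whole passage from $\gamma>A$ to $\gamma>A/2$ is a Cauchy--Schwarz plus mean-value argument, which you defer as ``the key step is to check''. In the paper it goes as follows:
\begin{itemize}
\item take the zeros with $\beta\in[\sigma,\sigma+1/\log Q)$ and split them into $O(\log T)$ families with $1$-spaced ordinates;
\item apply Montgomery's mean value theorem to get $\sum|F(-\gamma\rho)|^2\ll(\log Q)^2(T+Q)\sum_p p^{2\gamma\sigma}$;
\item combine this with the count $N(\sigma,T)^{1/2}$ and the bound $|C(\rho)|\ll 1/Q$.
\end{itemize}
The contribution is then $\ll(\log Q)^2N(\sigma,T)^{1/2}T^{1/2}Q^{\gamma\sigma-1/2}$. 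For $T=Q(\log Q)^5$ and the Guth--Maynard bound this is $Q^{\gamma+(15/13-\gamma)(1-\sigma)+o(1)}$. This computation, not any pointwise estimate, is the proof, and it is exactly where $30/13$ becomes $15/13$.

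Two further steps in your sketch do not work as written.

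First, the reduction to the Dirichlet polynomial $S(\rho)$. In $\gamma\int_0^1(n+v)^{\gamma\rho-1}\,dv$ the factor $(1+v/n)^{i\gamma t}$ oscillates once $|t|\gg Q$. Replacing the integral by $\gamma n^{\gamma\rho-1}$ costs $\ll|\rho|Q^{\gamma\beta-1}$ after summing over $n$, which is as large as the trivial bound. So you do not yet have a Dirichlet polynomial to which a mean value theorem applies. This can be repaired by partial summation: the weight $n((1+1/n)^{\gamma\rho}-1)/\rho$ is bounded and has bounded total variation on $n\sim Q$, so you are reduced to maximal partial sums of $S(\rho)$. The paper avoids the issue instead. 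It groups $p$ into blocks $Q<p\leqslant Q+Q/\log Q$ and replaces $p^\gamma+\gamma p^{\gamma-1}$ by $p^\gamma(1+\gamma/Q)$, the change costing $O(X^\gamma(\log X)^{-3})$ by Brun--Titchmarsh. The zero sum then factors exactly as $\sum_\rho C(\rho)F(-\gamma\rho)$.

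Second, near $\sigma=1$ the zero-free region saves only $\exp(-c(\log Q)^{1/3}(\log\log Q)^{-1/3})$. That cannot absorb a loss of $T^{\varepsilon}$: neither the $T^{o(1)}$ in Guth--Maynard nor the factor $(T/Q)^{1/2}=x^{\varepsilon/2}$ that your choice $T=Qx^{\varepsilon}$ puts into the mean value bound. Take $T=Q(\log Q)^{O(1)}$, as the paper does. For $\sigma\geqslant 4/5$, use a density estimate with only logarithmic loss and exponent below $2\gamma$. The paper takes Huxley's $N(\sigma,T)\ll T^{3(1-\sigma)/(3\sigma-1)}(\log T)^C$, whose exponent is at most $\frac{15}{7}(1-\sigma)$ there, and uses Guth--Maynard only for $1/2\leqslant\sigma\leqslant 4/5$.
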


\begin{notation}
Throughout this paper, $x$ denotes a sufficiently large number; $\varepsilon$ is a sufficiently small positive number, which may be distinct in each occurrences. Let $p$, $q$, with or without subscripts, always denote a prime number. As usual, we use $\Lambda(n)$ and $\psi(n)$ to denote von Mangoldt's function and the Chebychev function, respectively. $f(x) \ll g(x)$ means that $f(x)=O(g(x))$; $f(x) \asymp g(x)$ means that $f(x) \ll g(x) \ll f(x)$.
\end{notation}

\section{Preliminary Lemmas}

\begin{lemma}\label{psi-expansion}
	For any $T, x\geqslant 3$, one has
	\begin{equation*}
		\psi(x)=x-\sum_{|\tau|<T} \frac{x^{\rho}}{\rho}
			+O\bigg(\frac{x (\log x)^2}{T}+\frac{x \log T}{T}+\log x\bigg),
	\end{equation*}
	where the summation runs over all non--trivial zeros $\rho=\beta +i \tau$ of Riemann zeta--function, counted according to multiplicity.
\end{lemma}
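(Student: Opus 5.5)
This is the classical truncated explicit formula for $\psi(x)$, and I would prove it by Perron's formula applied to $-\zeta'/\zeta$, followed by a contour shift to the left.

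\textbf{Perron's formula.} First reduce to the case where $x$ is a half-odd-integer, or at least $x$ not too close to a prime power. Changing $x$ by $O(1)$ alters $\psi(x)$ by $O(\log x)$ and alters the main terms by an amount absorbed in the error. Put $c=1+1/\log x$. The truncated Perron formula gives
\begin{equation*}
\psi(x)=\frac{1}{2\pi i}\int_{c-iT}^{c+iT}\Big(-\frac{\zeta'}{\zeta}(s)\Big)\frac{x^s}{s}\,ds
+O\bigg(\sum_{n}\Lambda(n)\Big(\frac{x}{n}\Big)^{c}\min\Big(1,\frac{1}{T|\log(x/n)|}\Big)\bigg).
\end{equation*}
I bound this error in the standard way.
\begin{itemize}
\item For $n$ with $|n-x|>x/2$, the contribution is $\ll x\,(-\zeta'/\zeta)(c)/T\ll x\log x/T$.
\item For $n$ near $x$, write $|\log(x/n)|\asymp |x-n|/x$ and split dyadically according to the distance from $x$.
\item This leaves $\ll x(\log x)^2/T$, plus a term $\ll \log x$ from the prime power nearest $x$.
\end{itemize}

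\textbf{Choice of truncation height.} Next I adjust $T$ by $O(1)$ so that $|T-\tau|\gg 1/\log T$ for every zero $\rho=\beta+i\tau$. This is possible since there are $O(\log T)$ zeros in any unit window. Moving $T$ changes the zero sum by $O(\log T\cdot x/T)$, which is within the stated error. For such $T$ the standard partial fraction estimate
\begin{equation*}
\frac{\zeta'}{\zeta}(s)=\sum_{|\tau-t|<1}\frac{1}{s-\rho}+O(\log T)
\end{equation*}
yields $\zeta'/\zeta(\sigma\pm iT)\ll(\log T)^2$ uniformly for $-1\leqslant\sigma\leqslant 2$.

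\textbf{Contour shift.} Now shift the line of integration to $\operatorname{Re}s=-U$ with $U\to\infty$, keeping $U$ an odd integer so the path avoids the trivial zeros. The residues are as follows.
\begin{itemize}
\item At $s=1$ the residue is $x$.
\item The nontrivial zeros with $|\tau|<T$ give $-\sum x^\rho/\rho$.
\item At $s=0$ the residue is $-\zeta'(0)/\zeta(0)=-\log 2\pi$.
\item The trivial zeros give $-\sum_{k\leqslant U/2} x^{-2k}/(-2k)$, which is $O(1)$.
\end{itemize}

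\textbf{Remaining integrals.} The horizontal segments are the main obstacle, since they are where the choice of $T$ matters. On $-1\leqslant\sigma\leqslant c$ they contribute
\begin{equation*}
\ll\frac{(\log T)^2}{T}\int_{-1}^{c}x^\sigma\,d\sigma\ll\frac{x(\log T)^2}{T\log x}.
\end{equation*}
For $\sigma<-1$, the functional equation gives $\zeta'/\zeta(s)\ll\log(|s|+1)$ away from the trivial zeros. The contribution there is $\ll \log T/(Tx)$. The far-left vertical segment tends to $0$ as $U\to\infty$.

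\textbf{Conclusion.} Collecting everything, the total error is
\begin{equation*}
O\big(x(\log x)^2/T+x\log T/T+\log x\big),
\end{equation*}
where the factor $(\log T)^2/\log x$ is absorbed into $(\log x)^2+\log T$ via $ab\leqslant a^2+b^2$. This gives the lemma for all $T,x\geqslant3$.
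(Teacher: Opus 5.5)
Your route is the classical Perron/contour-shift argument that the paper invokes by citing Landau. Its core is sound: the Perron error bound, the choice of $T$ with $|T-\tau|\gg1/\log T$ at cost $O(x\log T/T)$, the bound $\zeta'/\zeta\ll(\log T)^2$ on the horizontal segments, and the residues. Two justifications fail as written, though both are easily repaired.

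\textbf{The reduction to half-odd-integer $x$.} Moving $x$ by $h=O(1)$ changes the zero sum by $\sum_{|\tau|<T}\int_x^{x+h}u^{\rho-1}\,du$. Estimating term by term only gives $\ll\sum_{|\tau|<T}\min(1,x/|\rho|)$. When $T=x$ this is of order $x\log x$, whereas the admissible error is then only $O((\log x)^2)$. That the change is really small is essentially the content of the formula you are proving, so the step is circular. It is also unnecessary, so delete it. Your Perron error analysis already covers every $x\geqslant 3$: the at most two $n$ with $|n-x|<1$ contribute $O(\log x)$. If $x$ is itself a prime power, the $n=x$ term of your error sum equals $\Lambda(x)$. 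It therefore absorbs the discrepancy $\Lambda(x)/2+O(\Lambda(x)/T)$ between $\psi(x)$ and the truncated Perron integral.

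\textbf{The final absorption.} The inequality $(\log T)^2/\log x\ll(\log x)^2+\log T$ is false in general, and $ab\leqslant a^2+b^2$ does not give it. Take $\log T=(\log x)^2$: the left side is $(\log x)^3$, while the right side is $\asymp(\log x)^2$. The horizontal-segment contribution $x(\log T)^2/(T\log x)$ is nonetheless admissible, but only by also using the $\log x$ term. If $T\leqslant x^2$, then $\log T\leqslant 2\log x$, so the contribution is at most $2x\log T/T$. If $T>x^2$, then $x/T<T^{-1/2}$, so the contribution is $\ll(\log T)^2T^{-1/2}\ll 1\leqslant\log x$. With these two repairs your proof is complete.
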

\begin{proof}
	See see the arguments on page 283 of Landau \cite{Landau-1912}.
\end{proof}

\begin{lemma}\label{Large-Seive}
	Suppose that $\chi$ is a Dirichlet character modulo $q$ and $\mathscr{A}_\chi$ is a finite set of complex numbers $s=\sigma+i t$. Let
	\begin{equation*}
		S(s, \chi)=\sum_{n=1}^N a_n \chi(n) n^{-s},
	\end{equation*}
	and let $T_0, T, \sigma_0, \delta$ be real numbers such that for each $\chi$
	\begin{align*}
			& T_0+\frac{\delta}{2} \leqslant t \leqslant T_0+T-\frac{\delta}{2} \hspace{2em} (s \in \mathscr{A}_\chi), \\
			& \sigma \geqslant \sigma_0 \hspace{10em} (s \in \mathscr{A}_\chi),
	\end{align*}
	and
\begin{equation*}
|t-t^{\prime}| \geqslant \delta \hspace{2em} (s \in \mathscr{A}_\chi,\,\,
s^{\prime} \in \mathscr{A}_\chi,\,\, s \neq s^{\prime}) .
\end{equation*}
Then one has
\begin{equation*}
\sum_{\chi\bmod q} \sum_{s \in \mathscr{A}_\chi} |S(s, \chi)|^2 \ll(q T+N)(\delta^{-1}+\log N)
\sum_{n=1}^N|a_n|^2 n^{-2 \sigma_0}\bigg(1+\log \frac{\log 2 N}{\log 2 n}\bigg) .
\end{equation*}
\end{lemma}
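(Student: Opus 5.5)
The plan is to reduce the discrete sum over the well-spaced points of $\mathscr{A}_\chi$ to a continuous hybrid mean value of Dirichlet polynomials, in three steps. Throughout, write $b_n=a_n n^{-\sigma_0}$, $u=\sigma-\sigma_0\geqslant 0$ for $s=\sigma+it\in\mathscr{A}_\chi$, and
\begin{equation*}
B_y(t,\chi)=\sum_{n\leqslant y} b_n\chi(n)n^{-it}, \qquad 1\leqslant y\leqslant N.
\end{equation*}

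\textbf{Step 1: remove the dependence on $\sigma$.} By partial summation,
\begin{equation*}
S(s,\chi)=N^{-u}B_N(t,\chi)+u\int_1^N B_y(t,\chi)\,y^{-u-1}\,dy .
\end{equation*}
The weight $u\,y^{-u-1}\,dy$ has total mass at most $1$ on $[1,N]$. By Cauchy--Schwarz, $|S(s,\chi)|^2$ is therefore bounded by a weighted average of $|B_y(t,\chi)|^2$ over $y$. Interchanging the order of summation, each coefficient $|b_n|^2$ collects the mass $\int_n^N u\,y^{-u-1}\,dy$, integrated against the appropriate measure. I would choose the splitting of $[1,N]$ in $y$ (dyadic in $\log y$) so that the resulting weight on $|b_n|^2$ is at most $1+\log(\log 2N/\log 2n)$, uniformly in $u\geqslant0$. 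This is the source of that factor. It then suffices to bound
\begin{equation*}
\sum_{\chi}\sum_{t}|B_y(t,\chi)|^2
\end{equation*}
uniformly in $y$, where for each $\chi$ the points $t$ are $\delta$-spaced in $[T_0+\delta/2,\,T_0+T-\delta/2]$.

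\textbf{Step 2: pass from discrete to continuous (Gallagher's inequality).} For each point $t_j$ and $f(t)=B_y(t,\chi)$, Gallagher's inequality gives
\begin{equation*}
|f(t_j)|^2\leqslant \delta^{-1}\int_{|t-t_j|<\delta/2}|f(t)|^2\,dt+\int_{|t-t_j|<\delta/2}|f(t)f'(t)|\,dt .
\end{equation*}
By the spacing hypothesis the intervals $|t-t_j|<\delta/2$ are disjoint, and by the range hypothesis they lie in $[T_0,T_0+T]$. Summing over $j$ and $\chi$ gives the term
\begin{equation*}
\delta^{-1}\sum_\chi\int_{T_0}^{T_0+T}|f|^2\,dt .
\end{equation*}
For the second term, apply Cauchy--Schwarz and note that $f'$ is the Dirichlet polynomial with coefficients $-i\,b_n\chi(n)\log n$. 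This term is then at most
\begin{equation*}
\Big(\sum_\chi\int|f|^2\Big)^{1/2}\Big(\sum_\chi\int|f'|^2\Big)^{1/2},
\end{equation*}
and the extra $\log n\leqslant\log N$ produces the $\log N$ summand in $(\delta^{-1}+\log N)$.

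\textbf{Step 3: the hybrid mean value.} It remains to prove
\begin{equation*}
\sum_{\chi\bmod q}\int_{T_0}^{T_0+T}\Big|\sum_{n\leqslant N}c_n\chi(n)n^{-it}\Big|^2dt\ll (qT+N)\sum_{n\leqslant N}|c_n|^2 .
\end{equation*}
This is the main obstacle, since it is the genuinely arithmetic input. I would first try the following route:
\begin{itemize}
\item[(i)] Insert a smooth majorant of the indicator of $[T_0,T_0+T]$ whose Fourier transform is supported in $[-1/T,1/T]$ (Beurling--Selberg, or a Fej\'er-type kernel).
\item[(ii)] Expand the square and use orthogonality of characters. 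This leaves only pairs $m\equiv n\pmod q$ with $(mn,q)=1$.
\item[(iii)] The diagonal $m=n$ contributes $\ll T\sum|c_n|^2$ for each character, hence $\ll qT\sum|c_n|^2$ in total.
\item[(iv)] For off-diagonal pairs, the kernel forces $|\log(m/n)|\ll 1/T$. Combined with $m\equiv n\pmod q$, and using $2|c_mc_n|\leqslant|c_m|^2+|c_n|^2$, each $n$ has $\ll 1+N/(qT)$ partners $m$. Their total weight is therefore $\ll (qT+N)\sum|c_n|^2$.
\end{itemize}
If the smoothing is awkward near the endpoints, I would instead use the Montgomery--Vaughan Hilbert inequality, applied within each residue class modulo $q$. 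Either route gives the bound, and combining Steps 1--3 yields the lemma.
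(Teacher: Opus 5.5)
The paper gives no argument for this lemma; it simply quotes Theorem 7.6 of Montgomery's \emph{Topics in Multiplicative Number Theory}. Your three-step outline is the standard proof of that theorem: partial summation in $\sigma$, then Gallagher's inequality, then the hybrid mean value $\sum_\chi\int_{T_0}^{T_0+T}|\sum c_n\chi(n)n^{-it}|^2\,dt\ll(qT+N)\sum|c_n|^2$. Steps 2 and 3 are fine as written. The disjointness of the intervals $|t-t_j|<\delta/2$ and their containment in $[T_0,T_0+T]$ are exactly what the hypotheses give. Both the Fej\'er-kernel count and Montgomery--Vaughan applied within residue classes give the factor $qT+N$.

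\textbf{Gap in Step 1.} Step 1 is the step that produces the factor $1+\log(\log 2N/\log 2n)$, and you have not carried it out. Read literally, your claim that $|b_n|^2$ ``collects the mass $\int_n^N u y^{-u-1}\,dy$'' cannot be right. That mass is $n^{-u}-N^{-u}\leqslant 1$, so it would give the lemma with no logarithmic factor at all.

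The problem is that $u=\sigma-\sigma_0$ varies with $s\in\mathscr{A}_\chi$. So you cannot interchange $\sum_s$ and $\int dy$ while keeping the weight $uy^{-u-1}$. The $\sigma$-free bound for $B_y$ controls $\sum_s w_s|B_y(t_s,\chi)|^2$ only through $\sup_s w_s$. The missing inequality is
\[
\sup_{u\geqslant 0}u\,y^{-u-1}=\frac{1}{y\log y}\sup_{v\geqslant 0}v e^{-v}=\frac{1}{e\,y\log y}\qquad (y>1).
\]

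\textbf{How to complete it.}
\begin{enumerate}
\item Apply Cauchy--Schwarz against the probability measure formed by the point mass $N^{-u}$ at $y=N$ together with $uy^{-u-1}\,dy$ on $[1,N]$.
\item Split off the range $1\leqslant y<2$, where $B_y=b_1$. This is necessary because $1/(y\log y)$ is not integrable at $y=1$.
\item Bound the weight on $2\leqslant y\leqslant N$ by the supremum above. This gives
\[
|S(s,\chi)|^2\leqslant |B_N(t,\chi)|^2+|b_1|^2+\frac1e\int_2^N|B_y(t,\chi)|^2\,\frac{dy}{y\log y},
\]
whose right side depends on $s$ only through $t$.
\item Sum over $s$ and $\chi$, apply the bound from Steps 2--3 for each $y$, and interchange. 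Then $|b_n|^2$ receives total weight
\[
\ll 1+\int_{\max(n,2)}^N\frac{dy}{y\log y}=1+\log\frac{\log N}{\log\max(n,2)}\ll 1+\log\frac{\log 2N}{\log 2n}.
\]
\end{enumerate}
No dyadic decomposition is needed. With this inequality inserted, your proof is complete.
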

\begin{proof}
	See Theorem 7.6 of Montgomery \cite{Montgomery-1971}.
\end{proof}

\begin{lemma}\label{zero-free-region}
There exists an absolute constant $c>0$ such that $\zeta(s)\neq 0$ with $s=\sigma + i t$ subject to
\begin{equation*}
\sigma > 1-\frac{c}{(\log (|t|+2))^{2 / 3}(\log \log (|t|+2))^{1 / 3}} .
\end{equation*}
\end{lemma}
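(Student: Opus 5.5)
This is the Vinogradov--Korobov zero-free region for $\zeta(s)$. The plan is the classical two-part argument. First, we obtain a strong upper bound for $\zeta$ near the line $\sigma=1$ from Vinogradov's mean value theorem. Second, we convert that growth bound into a zero-free region through the standard $3$--$4$--$1$ inequality, via a Landau/Borel--Carathéodory type lemma. In practice one would simply cite the result (e.g.\ Chapter~6 of Ivić's monograph, or Titchmarsh, Theorem~6.15). Below is how the proof itself goes.

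\textbf{Step 1 (growth bound).} We show that for $|t|\geqslant 3$ and $1-\tfrac12\leqslant\sigma\leqslant 1$,
\begin{equation*}
\zeta(\sigma+it)\ll |t|^{B(1-\sigma)^{3/2}}(\log|t|)^{2/3}
\end{equation*}
for some absolute constant $B$. For this we use the approximate formula $\zeta(s)=\sum_{n\leqslant |t|}n^{-s}+O(|t|^{-\sigma})$. We then split the sum dyadically and estimate the exponential sums $\sum_{N<n\leqslant 2N}n^{-it}$.
\begin{itemize}
\item For $N$ small relative to $|t|$, we Taylor-expand $t\log(n+h)$ to degree $k\asymp \log|t|/\log N$.
\item We then apply Vinogradov's mean value theorem. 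Its optimal form is Wooley's or Bourgain--Demeter--Guth's, but the classical Korobov--Vinogradov version suffices.
\item This yields $\sum_{N<n\leqslant 2N}n^{-it}\ll N\exp\!\big(-c(\log N)^3/(\log|t|)^2\big)$.
\end{itemize}
Partial summation and summing over the dyadic blocks give the bound stated above. This step is the main obstacle: all the real content lies in the mean value theorem and in the bookkeeping of the Taylor expansion.

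\textbf{Step 2 (from growth to a zero-free region).} Fix a zero $\rho=\beta+i\gamma$ with $|\gamma|$ large, and let $\delta\asymp(\log|\gamma|)^{-2/3}(\log\log|\gamma|)^{-1/3}$.
\begin{itemize}
\item Step 1 gives $\log|\zeta(s)|\ll \log\log|\gamma|$ in discs of radius $\asymp\delta$ centred at $1+\delta+it$, for $t=\gamma$ and $t=2\gamma$.
\item The Landau lemma then yields the following bounds:
\begin{equation*}
-\mathrm{Re}\frac{\zeta'}{\zeta}(\sigma+i\gamma)\leqslant -\frac{1}{\sigma-\beta}+O\Big(\frac{\log\log|\gamma|}{\delta}\Big),
\end{equation*}
and the analogous bound without the zero term at $2\gamma$.
\item Near $s=1$ we use $-\frac{\zeta'}{\zeta}(\sigma)\leqslant\frac{1}{\sigma-1}+O(1)$.
\end{itemize}
We insert these into the inequality $3\bigl(-\tfrac{\zeta'}{\zeta}(\sigma)\bigr)+4\bigl(-\mathrm{Re}\tfrac{\zeta'}{\zeta}(\sigma+i\gamma)\bigr)-\mathrm{Re}\tfrac{\zeta'}{\zeta}(\sigma+2i\gamma)\geqslant 0$. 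Choosing $\sigma=1+\eta\delta/\log\log|\gamma|$ with a suitable small $\eta$ then forces $1-\beta\gg \delta/\log\log|\gamma|$, which is exactly the claimed region.

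\textbf{Step 3 (bounded $t$).} For bounded $|t|$ the claim follows from the classical fact that $\zeta$ has no zeros on $\sigma=1$. It also uses the finiteness of the set of zeros in compact regions, after shrinking $c$ if necessary.
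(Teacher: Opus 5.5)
The paper gives no argument for this lemma; it only cites Montgomery's Corollary~11.4. Your outline (a Richert-type growth bound from Vinogradov's mean value theorem, then Landau's lemma, then the $3$--$4$--$1$ inequality) is the standard proof behind that citation. As written, however, your Step~2 does not reach the stated region. You take radius $\delta\asymp(\log|\gamma|)^{-2/3}(\log\log|\gamma|)^{-1/3}$ and $M\asymp\log\log|\gamma|$, so Landau's lemma has error $K\asymp\log\log|\gamma|/\delta$. The $3$--$4$--$1$ computation with $\sigma=1+\eta/K$ then gives $1-\beta\gg 1/K=\delta/\log\log|\gamma|\asymp(\log|\gamma|)^{-2/3}(\log\log|\gamma|)^{-4/3}$. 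This is smaller than the claimed width $(\log|\gamma|)^{-2/3}(\log\log|\gamma|)^{-1/3}$ by a factor $\log\log|\gamma|$, so the sentence ``which is exactly the claimed region'' is not correct.

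The loss comes from using discs smaller than Step~1 permits. For $1-\sigma\ll\delta$ you have $\delta^{3/2}\log|\gamma|=(\log\log|\gamma|)^{-1/2}=o(1)$, so the factor $|t|^{B(1-\sigma)^{3/2}}$ is $O(1)$. Your $M$ is therefore set entirely by the factor $(\log|t|)^{2/3}$ and by $|\zeta(1+\delta+i\gamma)|^{-1}\ll\delta^{-1}$. Enlarging the disc costs nothing until $|t|^{B(1-\sigma)^{3/2}}$ itself grows to a power of $\log|\gamma|$.

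So take $\delta'=(\log\log|\gamma|/\log|\gamma|)^{2/3}$ instead. Use discs centred at $1+\delta'+i\gamma$ and $1+\delta'+2i\gamma$, of radius a fixed multiple of $\delta'$, large enough that $1+\eta\delta'/\log\log|\gamma|+i\gamma$ lies in the inner disc where Landau's lemma applies. On these discs $|t|^{B(1-\sigma)^{3/2}}\leqslant\exp\bigl(O((\delta')^{3/2}\log|\gamma|)\bigr)=(\log|\gamma|)^{O(1)}$. Together with $|\zeta(1+\delta'+it)|\gg\delta'$, this still gives $\log|\zeta(s)/\zeta(s_0)|\ll\log\log|\gamma|$. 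The same $3$--$4$--$1$ computation then yields $1-\beta\gg\delta'/\log\log|\gamma|\asymp(\log|\gamma|)^{-2/3}(\log\log|\gamma|)^{-1/3}$, as required.

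This is exactly the choice $\theta(t)=(\log\log t/\log t)^{2/3}$, $\phi(t)=\log\log t$ in the classical theorem that deduces a zero-free region of width $\asymp\theta/\phi$ from $\zeta\ll e^{\phi(t)}$ on $\sigma\geqslant 1-\theta(t)$. Steps~1 and~3 are fine as sketches.
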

\begin{proof}
	See Corollary 11.4 of Montgomery \cite{Montgomery-1971}.
\end{proof}

\begin{lemma}\label{zero-density-estimate}
	Let $N(\sigma, T)$ denote the number of nontrivial zeros $\rho$ of $\zeta(s)$ with $\Re(\rho) \geqslant \sigma$ and $|\Im(\rho)| \leqslant T$. Then, one has
	\begin{equation*}
		N(\sigma, T) \ll T^{30(1-\sigma) / 13+o(1)}.
	\end{equation*}
\end{lemma}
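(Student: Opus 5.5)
The plan is to obtain the estimate by combining known zero-density theorems, each used on the range of $\sigma$ where its exponent is at most $30/13$. Write $N(\sigma,T)\ll T^{A(\sigma)(1-\sigma)+o(1)}$. It suffices to show that for each $\sigma\in[\tfrac12,1]$ some available estimate gives $A(\sigma)\leqslant 30/13$. The claimed bound is exactly the headline result of Guth and Maynard (\emph{New large value estimates for Dirichlet polynomials}), so in the paper one could simply cite it. Below I indicate how it is assembled from three inputs.

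\textbf{Range $\tfrac12\leqslant\sigma\leqslant\tfrac{7}{10}$.} Here I will use Ingham's estimate $N(\sigma,T)\ll T^{3(1-\sigma)/(2-\sigma)}(\log T)^5$. The function $3/(2-\sigma)$ is increasing in $\sigma$, so on this range it is at most its value at $\sigma=7/10$, namely $3/1.3=30/13$.

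\textbf{Range $\tfrac{7}{10}\leqslant\sigma\leqslant\tfrac{8}{10}$.} This is the essential new input and the main obstacle. It is the Guth--Maynard bound
\begin{equation*}
N(\sigma,T)\ll T^{15(1-\sigma)/(3+5\sigma)+o(1)},
\end{equation*}
which comes from their improved large values estimate for Dirichlet polynomials. That estimate feeds into the standard zero-detection argument: mollify $\zeta$, detect zeros by large values of a Dirichlet polynomial of length $T^{o(1)}$-dyadically decomposed, then count large values via the large sieve (Lemma \ref{Large-Seive}) together with the new large value bound. I will not reprove this and will quote it. Since $15/(3+5\sigma)$ is decreasing in $\sigma$, on this range it is at most its value at $\sigma=7/10$, which is $15/6.5=30/13$.

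\textbf{Range $\tfrac{8}{10}\leqslant\sigma\leqslant1$.} Here I will use Huxley's estimate $N(\sigma,T)\ll T^{3(1-\sigma)/(3\sigma-1)+o(1)}$, valid for $\sigma\geqslant 3/4$. The exponent $3/(3\sigma-1)$ is decreasing, so on this range it is at most $3/1.4=15/7<30/13$. The region very close to $\sigma=1$ is also covered trivially by Lemma \ref{zero-free-region}.

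Taking the maximum over the three ranges gives $A(\sigma)\leqslant 30/13$ uniformly, which proves the lemma. The only non-classical ingredient is the middle range. Its extremal point $\sigma=7/10$ is exactly where it meets Ingham's bound, and that is why the exponent $30/13$ appears.
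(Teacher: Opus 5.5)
Your proposal is correct and rests on the same input as the paper, which simply cites Guth--Maynard for the uniform bound; your splicing of Ingham on $[\tfrac12,\tfrac{7}{10}]$, the Guth--Maynard estimate $T^{15(1-\sigma)/(3+5\sigma)+o(1)}$ on $[\tfrac{7}{10},\tfrac{8}{10}]$ and Huxley on $[\tfrac{8}{10},1]$ is exactly how that uniform bound is derived there, and your exponent checks are right ($30/13$ at $\sigma=\tfrac{7}{10}$, $15/7$ at $\sigma=\tfrac{8}{10}$). Two small points: for $\sigma<\tfrac12$ the bound follows trivially from $N(\sigma,T)\ll T\log T$, and in your aside the zero-detecting Dirichlet polynomials have length a fixed positive power of $T$, not $T^{o(1)}$, which does not affect the argument since you only quote the result.
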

\begin{proof}
	See Theorem 1.2 of Guth and Maynard \cite{Guth-Maynard-2026}.
\end{proof}

\begin{lemma}\label{Huxley-density-estimate}
	For $4/5\leqslant\sigma\leqslant 1$ and $T\geqslant 3$, one has
	\begin{equation*}
		N(\sigma,T)\ll T^{3(1-\sigma)/(3\sigma-1)}(\log T)^C,
	\end{equation*}
	where $C>0$ is an absolute constant.
\end{lemma}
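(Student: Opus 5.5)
This is Huxley's zero-density estimate, and in the paper it is natural to cite Huxley's 1972 work (or Theorem 12.? in Montgomery / Ivić, Theorem 11.1). If I were to prove it, I would follow the standard zero-detection route of Montgomery and Huxley, with Lemma \ref{Large-Seive} supplying the mean-value input.

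\textbf{Zero detection.} Put $M_X(s)=\sum_{n\leqslant X}\mu(n)n^{-s}$ with $X=T^{\varepsilon'}$ to be chosen. Writing $\zeta(s)M_X(s)=\sum_n a_n n^{-s}$, one has $a_1=1$ and $a_n=0$ for $1<n\leqslant X$. Smoothing with $e^{-n/Y}$ and shifting the contour to $\Re s=1/2-\sigma$, each zero $\rho=\beta+i\tau$ with $\beta\geqslant\sigma$ and $|\tau|\leqslant T$ produces one of two outcomes. Either
\begin{equation*}
\Big|\sum_{X<n\leqslant Y^{1+\varepsilon}} a_n n^{-\rho}e^{-n/Y}\Big|\gg 1,
\end{equation*}
which is a Class I zero, or
\begin{equation*}
\Big|\zeta(\tfrac12+i t)M_X(\tfrac12+it)\Big|Y^{1/2-\sigma}\gg 1
\end{equation*}
holds for some $t$ with $|t-\tau|\leqslant(\log T)^2$, which is a Class II zero. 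I would then choose a $(\log T)^{3}$-separated subset of zeros in each class; this costs only a power of $\log T$.

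\textbf{Counting each class.} For Class I zeros, I split dyadically $n\sim N$ and raise to a power $k$ with $N^k\asymp$ a suitable size. Counting with Lemma \ref{Large-Seive} (for the trivial character) gives
\begin{equation*}
R_1\ll (T+N^{k})N^{k(1-2\sigma)}(\log T)^C .
\end{equation*}
This alone is weak, so I would instead apply Huxley's large-values estimate, obtained from Halász's method:
\begin{equation*}
R\ll \big(GN^{-2\sigma}+ T G^3 N^{-6\sigma}\big)(\log T)^C, \qquad G=\sum|b_n|^2\ll N .
\end{equation*}
Taking $N=Y$ with $Y\approx T^{1/(3\sigma-1)}$-type choices, this yields $R_1\ll T^{3(1-\sigma)/(3\sigma-1)+\varepsilon}$. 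For Class II zeros, the fourth-moment bound $\int_0^T|\zeta(\tfrac12+it)|^4\,dt\ll T(\log T)^4$, combined with Cauchy--Schwarz and the mean value theorem for $M_X$, gives
\begin{equation*}
R_2\ll T\,Y^{2(1-2\sigma)}(\log T)^C
\end{equation*}
(the Ingham route). Choosing $Y=T^{3/(2(3\sigma-1))}$ balances this against the Class I bound, and again gives $T^{3(1-\sigma)/(3\sigma-1)}$. The restriction $\sigma\geqslant 4/5$ is exactly where Huxley's term $TG^3N^{-6\sigma}$ does not dominate.

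\textbf{Main obstacle.} The hard step is the Halász--Huxley large-values inequality. Its proof runs through duality, Montgomery's mean-value theorem, and a bound for $\sum_{r\neq s}|\zeta(1/2+i(t_r-t_s))|$ obtained from the fourth moment. One also has to take care to remove the $T^{\varepsilon}$ losses so that only $(\log T)^C$ remains. In the paper I would simply cite Huxley's theorem (e.g.\ Ivić, \emph{The Riemann Zeta-Function}, Theorem 11.1 and (11.35)), and indeed in this range the Guth--Maynard bound of Lemma \ref{zero-density-estimate} is already comparable.
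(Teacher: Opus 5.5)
Your bottom line, to quote Huxley's classical estimate, is exactly what the paper does: it cites Chapter 11 of Ivi\'c and gives no argument. As a citation, your proposal therefore matches. The sketch you attach, however, does not prove the lemma, and several steps fail as written.

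\begin{itemize}
\item[(i)] \emph{Normalization of the large-values inequality.} Huxley's inequality is $R\ll (GNV^{-2}+TG^3NV^{-6})(\log T)^C$. In your normalization ($G\ll N$, threshold $V\asymp N^{\sigma}$) this reads $N^{2-2\sigma}+TN^{4-6\sigma}$. Your version $GN^{-2\sigma}+TG^3N^{-6\sigma}$ has lost a factor $N$ in each term.
\item[(ii)] \emph{Class II.} Cauchy--Schwarz with the fourth moment of $\zeta$ and the mean value theorem for $M_X$ (applied to $M_X^2$) gives only $\sum_r|\zeta M_X(\frac12+it_r)|^2\ll T(\log T)^C$. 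That is, it gives $R_2\ll TY^{1-2\sigma}(\log T)^C$, not $TY^{2(1-2\sigma)}$. At your $Y=T^{3/(2(3\sigma-1))}$ this equals $T^{1/(2(3\sigma-1))}$, which exceeds $T^{3(1-\sigma)/(3\sigma-1)}$ once $\sigma>5/6$. The bound $TY^{2(1-2\sigma)}$ comes instead from the pointwise bound $|M_X(\frac12+it)|\leqslant 2X^{1/2}$, and it carries a factor $X^2$.
\item[(iii)] \emph{Class I, the main gap.} You evaluate the Class I bound only at $N=Y$, and you use two different values of $Y$. But Class I zeros come from every dyadic block $X<N\leqslant Y^{1+\varepsilon}$. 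Put $A=T^{1/(3\sigma-1)}$. The bound $N^{2-2\sigma}+TN^{4-6\sigma}$ is within the target only for $A\leqslant N\leqslant A^{3/2}$. Shorter blocks must be raised to a power that lands in this window, and blocks with $A^{3/4}<N<A$ cannot be: $N$ itself is too short and $N^2$ is too long. With your choices $Y=A^{3/2}$ and $X=T^{\varepsilon'}$ these blocks are present, so your parameters do not close. With the inequality you quote, they can be avoided only by taking $Y\lesssim A^{3/4}$ or $X\gtrsim A$. Either choice makes Class II substantially harder than your sketch suggests, and this bookkeeping is precisely what the sketch skips.
\end{itemize}

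Your argument also ends with $T^{3(1-\sigma)/(3\sigma-1)+\varepsilon}$, whereas the lemma asserts a factor $(\log T)^C$ uniformly up to $\sigma=1$. This is not cosmetic. In (\ref{eq3_10}) the lemma is applied up to $\sigma_0$ of (\ref{eq3_8}). The saving available there is only $Q^{(15/14-\gamma)(1-\sigma)}\leqslant\exp(-c(\log Q)^{1/3}(\log\log Q)^{-1/3})$, which beats any power of $\log Q$ but cannot absorb a $T^{\varepsilon}$ loss.

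For the same reason, your remark that Lemma \ref{zero-density-estimate} is ``already comparable'' in this range misses why the paper needs Lemma \ref{Huxley-density-estimate} at all. The Guth--Maynard bound carries a factor $T^{o(1)}$, which is fatal near $\sigma_0$.

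Finally, $4/5$ is not where Huxley's method starts to work; the estimate is usually stated for $3/4\leqslant\sigma\leqslant 1$. It is the point where $3/(2(3\sigma-1))=15/14$, and that is what produces the constant $15/14$ in (\ref{eq3_10}).
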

\begin{proof}
	This is the classical Huxley zero--density estimate; e.g., Chapter 11 of Ivic \cite{Ivic-1985}.
\end{proof}

\section{Proof of Theorem \ref{Theorem-1}}

For $X\geqslant 3$, we put
\begin{equation*}
\mathcal{S}_{\gamma}(X):=
\sum_{p\leqslant X}\sum_{p^\gamma \leqslant q < (p+1)^\gamma}1.
\end{equation*}
Since $\gamma>1$, for primes $p$ and $q$ one has
\begin{equation*}
p=[q^{1/\gamma}]
\quad\iff\quad p\leqslant q^{1/\gamma}<p+1
\quad\iff\quad p^\gamma \leqslant q < (p+1)^\gamma.
\end{equation*}
Thus, for a prime $q\leqslant X^\gamma$, the condition that $[q^{1/\gamma}]$ is a prime is equivalent to the existence of a prime $p\leqslant X$ such that $p^\gamma\leqslant q<(p+1)^\gamma$. Therefore
\begin{equation*}
\Pi_\gamma(X^\gamma)=\mathcal{S}_{\gamma}(X)+O(X^{\gamma-1}).
\end{equation*}
Then, it suffices to prove that
\begin{equation}\label{eq3_0}
\mathcal{S}_{\gamma}(X)=\frac{X^\gamma}{\gamma(\log X)^2}(1+o(1)).
\end{equation}
Hence, by Taylor's formula, we obtain
\begin{align*}
	\mathcal{S}_{\gamma}(X)
	= & \,\,  \sum_{p\leqslant X} \sum_{p^\gamma \leqslant q < (p+1)^\gamma} 1
	=  \,\, \sum_{p\leqslant X}
			\Big(\pi((p+1)^\gamma) - \pi(p^{\gamma}) +O(1)\Big)
					\\
	= & \,\, \sum_{p\leqslant X}
			\Big(\pi(p^\gamma + \gamma p^{\gamma-1}) - \pi(p^{\gamma})\Big)
			+ O\Bigg(\sum_{p\leqslant X} (p^{\gamma-2}+1)\Bigg)
					\\
	= & \,\,  \sum_{p\leqslant X}
		\Big(\pi(p^\gamma + \gamma p^{\gamma-1}) - \pi(p^{\gamma})\Big)
			+ O\bigg(\frac{X}{\log X} + \frac{X^{\gamma-1}}{\log X}\bigg).
\end{align*}
By a splitting argument, we deduce that
\begin{align}\label{eq3_1}
	\mathcal{S}_{\gamma}(X)
	= & \,\, \sum_{\mathcal{Q}} \sum_{Q<p \leqslant Q+Q/\log Q}
	\Big(\pi(p^\gamma + \gamma p^{\gamma-1} ) - \pi(p^{\gamma})\Big)
	+O\bigg(\frac{X}{\log X}
	+\frac{X^{\gamma-1}}{\log X}\bigg)
			\nonumber\\
	= & \,\, \sum_{\mathcal{Q}} \sum_{Q<p \leqslant Q+Q/\log Q}
			\Big(\pi(p^\gamma + \gamma p^{\gamma} Q^{-1}) - \pi(p^{\gamma})\Big)
			\nonumber\\
	  & \,\, +O\bigg(\mathcal{E}(X)+\frac{X}{\log X}
			+\frac{X^{\gamma-1}}{\log X}\bigg),
\end{align}
where
\begin{equation*}
\mathcal{E}(X):=\sum_{\mathcal{Q}}\sum_{Q<p\leqslant Q+Q/\log Q}
\Big(\pi(p^\gamma+\gamma p^\gamma Q^{-1})
-\pi(p^\gamma+\gamma p^{\gamma-1})\Big),
\end{equation*}
and the first summation above in (\ref{eq3_1}) runs over a sequence of a certain set of integers $Q$ satisfying
$Q\leqslant X$ and
\begin{equation*}
	\sum_{\mathcal{Q}} 1 \ll \int_{2}^{X} \frac{1}{u/\log u} \mathrm{d}u \ll (\log X)^2.
\end{equation*}
Now, we shall establish explicit the upper bound estimate for primes, which is used to deal with
$\mathcal{E}(X)$, in short intervals. Set
\begin{equation*}
	H:=\gamma p^{\gamma-1}\frac{p-Q}{Q}
	\asymp Q^{\gamma-2}(p-Q),
	\qquad
	\vartheta:=\min\bigg\{\frac{\gamma-1}{2},\frac{1}{2}\bigg\}>0.
\end{equation*}
First, we consider the terms subject to $H\leqslant Q^\vartheta$, which implies
$p-Q\ll Q^{2-\gamma+\vartheta}$. If $Q^{2-\gamma+\vartheta}<1$, then the range is empty for all sufficiently large $Q$.
Otherwise, the trivial estimate $\pi(u+h)-\pi(u)\leqslant h+1$ gives that 
\begin{align*}
	&  \sum_{\substack{Q<p\leqslant Q+Q/\log Q\\ H\leqslant Q^\vartheta}}
	\Big(\pi(p^\gamma+\gamma p^\gamma Q^{-1})
	-\pi(p^\gamma+\gamma p^{\gamma-1})\Big)
				\\
	&\qquad\ll Q^{\gamma-2}
				\sum_{\substack{Q<p\leqslant Q+Q/\log Q\\ H\leqslant Q^\vartheta}}
				(p-Q) + \sum_{\substack{Q<p\leqslant Q+Q/\log Q\\ H\leqslant Q^\vartheta}} 1
				\\
	&\qquad\ll Q^{\gamma-2}(Q^{2-\gamma+\vartheta})^2+Q^{2-\gamma+\vartheta}
			\ll Q.
\end{align*}
Since $\sum_{\mathcal Q}1\ll(\log X)^2$, the total contribution of these terms is
\begin{equation*}
	\ll X(\log X)^2=o\bigg(\frac{X^\gamma}{(\log X)^3}\bigg).
\end{equation*}
For the remaining terms, we have $H>Q^\vartheta$. Since
$p\leqslant Q+Q/\log Q$ and $p\asymp Q$, one has
\begin{equation*}
	H=\gamma p^{\gamma-1}\frac{p-Q}{Q}
	\ll \frac{Q^{\gamma-1}}{\log Q}
	=o(Q^\gamma).
\end{equation*}
By setting $u=p^\gamma+\gamma p^{\gamma-1}$, we have
$u\asymp Q^\gamma$. Hence, for all sufficiently large $Q$, we derive that 
\begin{equation*}
	2\leqslant H\leqslant u.
\end{equation*}
In this case, there holds the Brun--Titchmarsh inequality
\begin{equation*}
	\pi(u+h)-\pi(u)\leqslant\frac{2h}{\log h}
	\qquad (2\leqslant h\leqslant u)
\end{equation*}
 with $h=H$. Moreover, since
$H>Q^\vartheta$, we have $\log H\geqslant\vartheta\log Q$.
By noting the estimate  
\begin{align*}
	\sum_{Q<p \leqslant Q+Q/\log Q} (p-Q)
		= & \,\, \int_{Q}^{Q+Q/\log Q} (u-Q) \mathrm{d} \pi(u)
			= (1+o(1)) \int_{Q}^{Q+Q/\log Q} \frac{u-Q}{\log u} \mathrm{d} u
				\\
		\ll & \,\, \frac{1}{\log Q} \int_{Q}^{Q+Q/\log Q} (u-Q) \mathrm{d} u
			\ll \frac{Q^2}{(\log Q)^3},
\end{align*}
we deduce that
\begin{align*}
	&\sum_{\mathcal Q}\sum_{\substack{Q<p\leqslant Q+Q/\log Q\\H>Q^\vartheta}}
	\Big(\pi(p^\gamma+\gamma p^\gamma Q^{-1})
	-\pi(p^\gamma+\gamma p^{\gamma-1})\Big)\\
	&\qquad\ll
	\sum_{\mathcal Q}\frac{Q^{\gamma-2}}{\log Q}
	\sum_{Q<p\leqslant Q+Q/\log Q}(p-Q)\\
	&\qquad\ll \sum_{\mathcal Q}\frac{Q^\gamma}{(\log Q)^4}
	\ll \int_2^X\frac{u^\gamma}{(\log u)^4}
	\cdot\frac{1}{u/\log u}\,\mathrm{d}u
	\ll\frac{X^\gamma}{(\log X)^3}.
\end{align*}
Consequently, we obtain
\begin{equation*}
	\mathcal{E}(X)\ll\frac{X^\gamma}{(\log X)^3},
\end{equation*}
which combined with (\ref{eq3_1}) yields that 
\begin{equation}\label{eq3_2}
	\mathcal{S}_{\gamma}(X)
	= \sum_{\mathcal{Q}} \sum_{Q<p \leqslant Q+Q/\log Q}
		\Big(\pi(p^\gamma + \gamma p^{\gamma} Q^{-1}) - \pi(p^{\gamma})\Big)
	+ O\bigg(\frac{X^\gamma}{(\log X)^3}\bigg).
\end{equation}
Now, we focus on the main term of (\ref{eq3_2}). For $y=o(z)$, it is easy to see that
\begin{align*}
	& \,\,\pi(z+y) - \pi(z) = \sum_{z<p\leqslant z+y} 1
				\\
	= & \,\, \sum_{z<n\leqslant z+y}\frac{\Lambda(n)}{\log n}
		+ O\bigg(\sum_{2\leqslant k\ll \log z}\sum_{z<p^k\leqslant z+y}
			\frac{1}{k}\bigg)
				\\
	= & \,\, \sum_{z<n\leqslant z+y}\frac{\Lambda(n)}{\log z}
		+ O\bigg(\frac{y}{(\log z)^2} +
				\sum_{2\leqslant k\ll \log z}\sum_{z<p^k\leqslant z+y}
				\frac{1}{k}\bigg)
				 \\
	= & \,\, \frac{\psi(z+y) - \psi(z)}{\log z}
			+ O\bigg(\frac{y}{(\log z)^2} +
				\sum_{2\leqslant k\ll \log z}\sum_{z<p^k\leqslant z+y}
				\frac{1}{k}\bigg).
\end{align*}
Thus, we obtain that
\begin{align}\label{eq3_3}
	  & \,\, \sum_{\mathcal{Q}} \sum_{Q<p \leqslant Q+Q/\log Q}
			\Big(\pi(p^\gamma + \gamma p^{\gamma} Q^{-1}) - \pi(p^{\gamma})\Big)
					\nonumber\\
	= & \,\, \sum_{\mathcal{Q}} \sum_{Q<p \leqslant Q+Q/\log Q}
			\frac{1}{\gamma \log p}
			\Big(\psi(p^\gamma + \gamma p^{\gamma} Q^{-1}) - \psi(p^{\gamma})\Big)
					\nonumber\\
	  & \,\, + O\Bigg(\sum_{\mathcal{Q}} \sum_{Q<p \leqslant Q+Q/\log Q}
	  		\frac{p^{\gamma} Q^{-1}}{(\log Q)^2}
	  		+ \sum_{\mathcal{Q}} \sum_{Q<p \leqslant Q+Q/\log Q}
	  		\,\sum_{2\leqslant k\ll \log p}
	  		\,\sum_{p^\gamma <q^k\leqslant p^\gamma+\gamma p^\gamma Q^{-1}}
			\frac{1}{k}\Bigg).
\end{align}
The first sum in the error term of (\ref{eq3_3}) is bounded by
\begin{align*}
	\sum_{\mathcal{Q}} \sum_{Q<p \leqslant Q+Q/\log Q}
		\frac{p^{\gamma} Q^{-1}}{(\log Q)^2}
	= & \,\, \sum_{\mathcal{Q}} \frac{1}{Q(\log Q)^2}
		\int_Q^{Q+Q/\log Q} u^{\gamma} \mathrm{d} \pi(u)
			\\
	= & \,\, \sum_{\mathcal{Q}}	\frac{1+o(1)}{Q(\log Q)^2}
		\int_Q^{Q+Q/\log Q} \frac{u^{\gamma}}{\log u} \mathrm{d} u
			\\
	\ll & \,\, \sum_{\mathcal{Q}}	\frac{1+o(1)}{Q(\log Q)^3}
		\cdot \bigg(\bigg(Q+\frac{Q}{\log Q}\bigg)^{\gamma+1}- Q^{\gamma+1}\bigg)
			\\
	\ll & \,\, \sum_{\mathcal{Q}}\frac{Q^\gamma}{(\log Q)^4}
	\ll  \frac{X^\gamma}{(\log X)^3}.
\end{align*}
For the second sum in the error term of (\ref{eq3_3}), by noting that for distinct primes $p$ the intervals $(p^{\gamma}, p^{\gamma} + \gamma p^{\gamma}Q^{-1}]$ are pairwise disjoint, and thus
\begin{align*}
	\sum_{\mathcal{Q}} \sum_{Q<p \leqslant Q+Q/\log Q}
		\,\sum_{2\leqslant k\ll \log p}
		\,\sum_{p^\gamma <q^k\leqslant p^\gamma+\gamma p^\gamma Q^{-1}}
		\frac{1}{k}
	\ll \sum_{2\leqslant k\ll \log X} \, \sum_{q^k \leqslant X^\gamma} \frac{1}{k}
	\ll X^{\gamma/2+\varepsilon} \ll \frac{X^\gamma}{(\log X)^3}.
\end{align*}
Hence, we derive that
\begin{align}\label{eq3_4}		
	& \,\, \sum_{\mathcal{Q}} \sum_{Q<p \leqslant Q+Q/\log Q}
		\Big(\pi(p^\gamma + \gamma p^{\gamma} Q^{-1}) - \pi(p^{\gamma})\Big)
			\nonumber\\
	= & \,\, \sum_{\mathcal{Q}} \sum_{Q<p \leqslant Q+Q/\log Q}
		\frac{1}{\gamma \log p}
		\Big(\psi(p^\gamma + \gamma p^{\gamma} Q^{-1}) - \psi(p^{\gamma})\Big)
		+ O\bigg(\frac{X^\gamma}{(\log X)^3}\bigg).
\end{align}
Each term in the inner sum of (\ref{eq3_4}) can be expressed by the zeros of Riemann $\zeta$--function via the well--known explicit formula for $\psi(y)$, e.g., Lemma \ref{psi-expansion}. For $Q^\gamma \leqslant y \leqslant 2Q^{\gamma}$, there holds
\begin{equation}\label{psi-in}
	\psi(y)
	= y -\sum_{|\tau| \leqslant T} \frac{y^\rho}{\rho}
		+ O\bigg(\frac{Q^{\gamma} (\log Q)^2}{T}+\frac{Q^{\gamma} \log T}{T}+\log Q\bigg)
\end{equation}
uniformly for $Q,T\geqslant 3$. Here the summation runs over all non--trivial zeros $\rho=\beta +i \tau$ of $\zeta(s)$ counted according to multiplicity. The almost optimal choice of $T$ is given by $T=Q (\log Q)^5$, which takes the error term being small enough. It follows from (\ref{eq3_2}), (\ref{eq3_4}), (\ref{psi-in}) and partial summation that
\begin{align*}
	\mathcal{S}_{\gamma}(X)
	= & \,\, \sum_{\mathcal{Q}} \sum_{Q<p \leqslant Q+Q/\log Q}
			\frac{p^{\gamma}}{Q \log p}
		+ O\bigg(\frac{X^\gamma}{(\log X)^3}\bigg)
				\\
	  & \,\, - \sum_{\mathcal{Q}} \sum_{Q<p \leqslant Q+Q/\log Q}
			\frac{1}{\gamma \log p} \sum_{|\tau| \leqslant T}
			\frac{(p^\gamma + \gamma p^{\gamma} Q^{-1})^\rho-(p^\gamma)^\rho}{\rho}
				\\
	= & \,\, \sum_{p\leqslant X}\frac{p^{\gamma-1}}{\log p}
			+ O\bigg(\sum_{\mathcal{Q}} \sum_{Q<p \leqslant Q+Q/\log Q}
				\frac{p^{\gamma-1}}{\log p}\cdot\frac{(p-Q)}{Q}\bigg)
			+ O \bigg(\frac{X^\gamma}{(\log X)^3}\bigg)
				\\
	  & \,\, - \sum_{\mathcal{Q}} \sum_{Q<p \leqslant Q+Q/\log Q}
			\frac{1}{\gamma \log p} \sum_{|\tau| \leqslant T}
			\frac{(p^\gamma + \gamma p^{\gamma} Q^{-1})^\rho-(p^\gamma)^\rho}{\rho}
				\\
	= & \,\, \frac{X^\gamma}{\gamma(\log X)^2} + 	
			O\bigg(\frac{X^\gamma}{(\log X)^3}\bigg)
			+ O\Bigg(\sum_{\mathcal{Q}} \frac{|R(Q)|}{\log Q}\Bigg),
\end{align*}
where
\begin{align*}
	R(Q) :=
	& \,\,	\sum_{Q<p \leqslant Q+Q/\log Q}
			\sum_{|\tau| \leqslant T}
			\frac{(p^\gamma + \gamma p^{\gamma} Q^{-1})^\rho-(p^\gamma)^\rho}{\rho}
					\\
	= & \,\, \sum_{|\tau| \leqslant T} \frac{(1+\gamma Q^{-1})^{\rho}-1}{\rho}
			\sum_{Q<p \leqslant Q+Q/\log Q} p^{\gamma\rho},
\end{align*}
and the main term follows from partial summation and prime number theorem demonstrated as follows
	\begin{align*}
		\sum_{p\leqslant X}\frac{p^{\gamma-1}}{\log p}
		& =\int_2^X \frac{u^{\gamma-1}}{\log u}\,\mathrm{d}\pi(u) =\int_2^X \frac{u^{\gamma-1}}{(\log u)^2}\,\mathrm{d}u
		+O\bigg(\frac{X^\gamma}{(\log X)^3}\bigg)
		\\
		& =\frac{X^\gamma}{\gamma(\log X)^2}+O\bigg(\frac{X^\gamma}{(\log X)^3}\bigg).
	\end{align*}

Define
\begin{equation*}
	F(s)=\sum_{Q<p \leqslant Q+Q/\log Q} p^{-s},\qquad  \quad C(s)=\frac{(1+\gamma Q^{-1})^s-1}{s}.
\end{equation*}
Then, we have
\begin{equation*}
	R(Q) = \sum_{|\tau| \leqslant T} C(\rho)F(-\gamma\rho).
\end{equation*}
It suffices to show that
\begin{equation}\label{eq3_5}
	R(Q) \ll \frac{X^\gamma}{(\log X)^5}
\end{equation}
holds uniformly for $Q\leqslant X$.
According to the real part of the zeros of $\zeta(s)$, we decompose $R(Q)$  as follows
\begin{equation*}
	R(Q)=R_1(Q)+R_2(Q),
\end{equation*}
where
\begin{equation*}
	R_1(Q):=\sum_{\substack{\beta\geqslant1/2\\|\tau|\leqslant T}}
	C(\rho)F(-\gamma\rho),
	\qquad
	R_2(Q):=\sum_{\substack{\beta<1/2\\|\tau|\leqslant T}}
	C(\rho)F(-\gamma\rho).
\end{equation*}
We first deal with $R_1(Q)$. Divide the interval $[1/2,1]$ into $J$ disjoint subintervals
$I_j=[\sigma_j,\sigma_{j+1})$, $(j=1,2,\dots,J)$, with length 
$\Delta\sigma_j:=\sigma_{j+1}-\sigma_j\asymp1/\log Q$, where the number of such intervals satisfies $J\ll\log Q$.
By the trivial upper bound $|C(\rho)|\ll1/Q$ and Cauchy's inequality,
the contribution of $R_1(Q)$ satisfies
\begin{align}\label{eq3_6}
	R_1(Q)
	\ll & \,\, \frac{1}{Q} \sum_{j=1}^{J}
		\sum_{\substack{\beta \in I_j \\ |\tau| \leqslant T}} |F(-\gamma\rho)|
				\nonumber\\
	\ll & \,\, \frac{1}{Q} \sum_{j=1}^{J}
			\Bigg(\sum_{\substack{\beta \geqslant \sigma_j \\
						|\tau| \leqslant T}} 1 \Bigg)^{1/2}
			\Bigg(\sum_{\substack{\beta \in I_j \\ |\tau| \leqslant T}}
			|F(-\gamma\rho)|^2 \Bigg)^{1/2}
				\nonumber\\
	\ll & \,\, \frac{1}{Q} \sum_{j=1}^{J}
			\big(N(\sigma_j, T)\big)^{1/2}
			\Bigg(\sum_{\substack{\beta \in I_j \\ |\tau| \leqslant T}}
			|F(-\gamma\rho)|^2 \Bigg)^{1/2}.
\end{align}
where $N(\sigma_j, T)$ denotes the number of nontrivial zeros $\rho=\beta+i \tau$ of $\zeta(s)$ satisfying $\beta \geqslant \sigma_j$ and $|\tau| \leqslant T$.
For the second factor, we divide the zeros with $\beta\in I_j$ and $|\tau|\leqslant T$ into several subsets. By the Riemann--von Mangoldt formula, the number of zeros with $u<|\tau|\leqslant u+1$ is $O(\log (u+2))$ (e.g., see Chapter 1 of Ivi\'c \cite{Ivic-1985}). For each integer $m$, we arrange the zeros with $m<\tau\leqslant m+1$ in increasing order based on the magnitude of their imaginary part, and assign them a series of subscript. Then for each fixed subscript $r$, we gather the $r$--th zeros from the intervals $(m,m+1]$ with $m$ being even numbers, and similarly those with $m$ being odd numbers. Since each unit interval contains $O(\log T)$ zeros, this yields $O(\log T)$ subsets. Hence, these zeros, which satisfy $\beta\in I_j$ and $|\tau|\leqslant T$, can be classified as the union of $O(\log T)$ subsets such that any two distinct zeros in the same subset ssatisfy $|\tau-\tau'|\geqslant 1$. For each subset, we apply Lemma \ref{Large-Seive} to $F(s)$ at the points $s=-\gamma\rho$ with $q=1$ and $\delta=1$, where the range of the summation is $\ll Q$, and $\Re(-\gamma\rho)=-\gamma\beta\geqslant-\gamma\sigma_{j+1}$. Since $p\asymp Q$ and $\sigma_{j+1}-\sigma_j\asymp1/\log Q$, the factor $p^{2\gamma\sigma_{j+1}}$ arising from Lemma \ref{Large-Seive} is $\asymp p^{2\gamma\sigma_j}$. Accordingly, after summing over the above $O(\log T)$ subsets, we derive that
\begin{equation*}
	\sum_{\substack{\beta \in I_j \\ |\tau| \leqslant T}}
	|F(-\gamma\rho)|^2
	\ll (\log Q)^2 (T+Q) \sum_{Q<p \leqslant Q+Q/\log Q} p^{2\gamma\sigma_{j}}.
\end{equation*}
Summing over all $j=1, \dots, J$, we obtain that
\begin{align}\label{eq3_7}
	R_1(Q)
	\ll & \,\, \frac{\log Q}{Q} \sum_{j=1}^{J} \big(N(\sigma_j, T)\big)^{1/2}
			\Bigg((T+Q) \sum_{Q<p \leqslant Q+Q/\log Q} p^{2\gamma\sigma_j}\Bigg)^{1/2}
					\nonumber \\
	\ll & \,\, \frac{(\log Q)^2}{Q}
			\sup_{1/2\leqslant\sigma\leqslant\sigma_0}
			\Bigg(\big(N(\sigma, T)\big)^{1/2} (T+Q)^{1/2}
			\Bigg(\sum_{Q<p \leqslant Q+Q/\log Q} p^{2\gamma\sigma} \Bigg)^{1/2} \Bigg)
					\nonumber \\
	\ll & \,\, (\log Q)^2 \sup_{1/2\leqslant\sigma\leqslant\sigma_0}
			\big(N(\sigma, T)\big)^{1/2} T^{1/2} Q^{-1/2+\gamma\sigma},
\end{align}
where
\begin{equation}\label{eq3_8}
	\sigma_0 = 1 - \frac{c}{(\log T)^{2 / 3} (\log \log T)^{1 / 3}}
\end{equation}
follows from Vinogradov's zero--free region, i.e., Lemma \ref{zero-free-region}.
Recalling that $T = Q(\log Q)^5$ and $Q \leqslant X$,
we consider the supremum in (\ref{eq3_7}) in two cases. If $1/2\leqslant\sigma\leqslant 4/5$, then we use Lemma \ref{zero-density-estimate} to get
\begin{equation}\label{eq3_9}
(\log Q)^2\big(N(\sigma,T)\big)^{1/2}T^{1/2}Q^{-1/2+\gamma\sigma}
\ll Q^{\gamma+(15/13-\gamma)(1-\sigma)+o(1)}
\ll Q^{\gamma-\eta_{\gamma}}
\ll X^{\gamma-\eta_{\gamma}}
\end{equation}
for some $\eta_\gamma>0$, provided that $\gamma>15/13$. If $4/5\leqslant\sigma\leqslant\sigma_0$, then Lemma \ref{Huxley-density-estimate} yields
\begin{align}\label{eq3_10}
 (\log Q)^2\big(N(\sigma,T)\big)^{1/2}T^{1/2}Q^{-1/2+\gamma\sigma}
\ll (\log Q)^{C_1} Q^{\gamma\sigma+\frac{3(1-\sigma)}{2(3\sigma-1)}}
\ll (\log Q)^{C_1} Q^{\gamma+(15/14-\gamma)(1-\sigma)},
\end{align}
where $C_1>0$ is absolute.
For $Q\leqslant X^{1/2}$, the last expression in (\ref{eq3_10}) is trivially
$\ll X^\gamma(\log X)^{-5}$. For $X^{1/2}<Q\leqslant X$, then $\log Q\asymp\log X$, and thus
\begin{equation*}
	Q^{(15/14-\gamma)(1-\sigma)}
	\leqslant \exp\big(-\mathfrak{c}_\gamma(\log Q)^{1/3}(\log\log Q)^{-1/3}\big)\ll (\log X)^{-A}
\end{equation*}
for any fixed $A>0$, provided that $\gamma>15/14$. Hence (\ref{eq3_10}) gives
$\ll X^\gamma(\log X)^{-5}$ in this case as well.
Combining the above two cases, we obtain
$|R_1(Q)|\ll X^\gamma(\log X)^{-5}$.

Next, we shall estimate $R_2(Q)$. By the same separation argument used above, the zeros with
$\beta<1/2$ and $|\tau|\leqslant T$ can be divided into $O(\log T)$ subsets such that
any two distinct zeros in the same subset satisfy $|\tau-\tau'|\geqslant1$. For each such subset, we apply Lemma \ref{Large-Seive} to $F(s)$ at the points $s=-\gamma\rho$, with $q=1$, $\delta=1$ and $\sigma_0=-\gamma/2$. Since
$\Re(-\gamma\rho)=-\gamma\beta>-\gamma/2$, we obtain
\begin{align*}
	\sum_{\substack{\beta<1/2\\|\tau|\leqslant T}}
	|F(-\gamma\rho)|^2
	&\ll(\log Q)^2(T+Q)
	\sum_{Q<p\leqslant Q+Q/\log Q}p^\gamma\\
	&\ll TQ^{\gamma+1}\log Q.
\end{align*}
On the other hand, it follows from the Riemann--von Mangoldt formula that 
\begin{equation*}
	\sum_{\substack{\beta<1/2 \\ |\tau|\leqslant T}} 1 \ll T\log T.
\end{equation*}
Since $\gamma>1$, therefore, by $|C(\rho)|\ll Q^{-1}$ and Cauchy's inequality, recalling that
$T=Q(\log Q)^5$, we deduce that 
\begin{align*}
	|R_2(Q)|
	\ll&\,\, \frac{1}{Q}\Bigg(\sum_{\substack{\beta<1/2 \\ |\tau|\leqslant T}} 1\Bigg)^{1/2}
	\Bigg(\sum_{\substack{\beta<1/2\\|\tau|\leqslant T}}
	|F(-\gamma\rho)|^2\Bigg)^{1/2}
			\\
	\ll&\,\, \frac{1}{Q}(T\log T)^{1/2}
	\Bigg(\sum_{\substack{\beta<1/2\\|\tau|\leqslant T}}
	|F(-\gamma\rho)|^2\Bigg)^{1/2}
			\\
	\ll&\,\, Q^{(\gamma+1)/2}(\log Q)^6
	\ll\frac{X^\gamma}{(\log X)^5},
\end{align*}
uniformly for $Q\leqslant X$.
Combining the estimates of $R_1(Q)$ and $R_2(Q)$, we prove (\ref{eq3_5}).
This completes the proof of (\ref{eq3_0}). Finally, by taking $x=X^\gamma$, we derive that
\begin{equation*}
	\Pi_\gamma(x)=\frac{X^\gamma}{\gamma(\log X)^2}(1+o(1))
		=\frac{\gamma x}{(\log x)^2}(1+o(1)),
\end{equation*}
which completes the proof of Theorem \ref{Theorem-1}.

\section*{Acknowledgement}

%\noindent
The authors would like to appreciate the referee for his/her patience in refereeing this paper.
This work is supported by Beijing Natural Science Foundation (Grant No. 1242003), and
the National Natural Science Foundation of China (Grant Nos. 12471009, 12301006, 11901566, 12001047).

\end{document}